\newtheorem{theorem}{Theorem}
\newtheorem{lemma}[theorem]{Lemma}
\newtheorem{remark}[theorem]{Remark}
\newenvironment{proof}[1][Proof]{\noindent\textbf{#1.} }{\ \rule{0.5em}{0.5em}}
\begin{document}

\title{The Dirichlet problem for the minimal hypersurface equation with
Lipschitz continuous boundary data on domains of a Riemannian manifold }
\author{A. Aiolfi, G. Nunes, L. Sauer, R. B. Soares.}
\date{}
\maketitle

\begin{abstract}
Given a $C^{2}$-domain $\Omega \subset M$ with compact boundary, where $M$
is an arbitrary complete Riemannian manifold, we search for smallness
conditions on the boundary data for which the Dirichlet problem for the
minimal hypersurface equation is solvable. We obtain an extension to
Riemannian manifolds of an existence result of G. H. Williams ( J. Reine
Angew. Math. 354:123-140, 1984).
\end{abstract}

\section{Introduction}

\qquad Let $M^{n}$, $n\geq 2$, be a complete Riemannian manifold and let $%
\Omega \subset M$ be a $C^{2}$-domain with compact boundary. We consider the
Dirichlet problem%
\begin{equation}
\left\{ 
\begin{array}{c}
\mathfrak{M}\left( u\right) :=\func{div}\left( \frac{\func{grad}u}{\sqrt{%
1+\left\vert \func{grad}u\right\vert ^{2}}}\right) =0\text{ in }\Omega \text{%
, }u\in C^{2}\left( \Omega \right) \cap C^{0}\left( \overline{\Omega }\right)
\\ 
u|_{\partial \Omega }=f%
\end{array}%
\right.  \label{PD}
\end{equation}%
where $f\in C^{0}\left( \partial \Omega \right) $ is given \textit{a priori}%
, $\func{grad}$ and $\func{div}$ are the gradient and divergence in $M$. If $%
u$ is a solution of (\ref{PD}) then the graph of $u$ is a minimal
hypersurface of $M\times \mathbb{R}$.

When $M=\mathbb{R}^{n}$ and $\Omega $ is bounded, it is well known that
problem (\ref{PD}) is solvable for any $f\in C^{0}\left( \partial \Omega
\right) $ if and only if $\Omega $ is mean convex (Theorem 1 of \cite{JS}).
The existence part of this result has been extended and generalized to the
Riemannnian setting (see for instance \cite{DHL},\cite{ET},\cite{NR}, \cite%
{ARS}). If the boundary data $f$ is restricted in some way, problem (\ref{PD}%
) may be solvable even if $\Omega $ is non mean convex. It was shown in
Theorem 1 of \cite{ARS} - which is an extension to Riemannian manifold of
the classical result of H. Jenkins and J. Serrin (Theorem 2 of \cite{JS}) -
that if $f\in C^{2}\left( \partial \Omega \right) $ and 
\begin{equation}
osc\left( f\right) =\underset{\partial \Omega }{\sup }f-\underset{\partial
\Omega }{\inf }f\leq \mathfrak{C}\left( \left\vert Df\right\vert ,\left\vert
D^{2}f\right\vert ,\left\vert A\right\vert ,Ric_{M}\right) ,  \label{cso}
\end{equation}%
where $\left\vert A\right\vert $ denotes the norm of the second fundamental
for of $\partial \Omega $ and $\mathfrak{C}$ is a function which has an
explicit form (Section 2, p. 78 of \cite{ARS}), then problem (\ref{PD}) is
solvable. However, at least in Euclidean spaces, such smallness condition on
the boundary data is not the least. In fact, it seems that the least
restrictive condition was given by G. H. Williams in Theorem 1 of \cite{W}.
He shows that for a non mean convex bounded $C^{2}$-domain $\Omega \subset 
\mathbb{R}^{n}$ and $f\in C^{0,1}\left( \partial \Omega \right) $ with
Lipschitz constant%
\begin{equation*}
Lip\left( f\right) =K\in \lbrack 0,\frac{1}{\sqrt{n-1}}),
\end{equation*}%
the problem (\ref{PD}) is solvable if $osc\left( f\right) $ is sufficiently
small, $osc\left( f\right) <\varepsilon \left( n,K,\Omega \right) $
(Corollary 1 of \cite{W}). He also shows that if $K>\frac{1}{\sqrt{n-1}}$
then\textbf{\ }there is a positive boundary data $f$ with $Lip\left(
f\right) =K$ such that (\ref{PD}) has no classical solution (Theorem 4 of 
\cite{W}).

Williams' results (existence and non existence) were extended to unbounded
domain $\Omega \subset \mathbb{R}^{2}$ by N. Kutev and F. Tomi in \cite{KT}
and J. Ripoll and F. Tomi gave, in the specific case $\Omega \subset \mathbb{%
R}^{2}$, Williams' condition in a more explicit form (see Theorem 1 of \cite%
{RT}). Schulz and Williams \cite{SW} and Bergner \cite{B} generalized
Williams' result to prescribed mean curvature (in Euclidean spaces). Here,
our main objective is to obtain an extension of Williams' ex\nolinebreak
istence theorem \cite{W} to Riemannian manifolds. In order to state our main
result, we establish some notation.

Let $\nu $ be the unit normal vector field to $\partial \Omega $ which point
to $\Omega $. Let $\mathfrak{H}$ be the mean curvature of $\partial \Omega $
with respect to $\nu $ and set%
\begin{equation}
\partial ^{-}\Omega =\limfunc{clos}\left\{ x\in \partial \Omega ;\mathfrak{H}%
\left( x\right) <0\right\}  \label{nmc}
\end{equation}%
and 
\begin{equation}
\mathfrak{H}_{\inf }=\inf \left\{ \mathfrak{H}\left( x\right) ;x\in \partial
^{-}\Omega \right\} .  \label{infb}
\end{equation}

Given $x\in \partial ^{-}\Omega $, let $R\left( x\right) $ the maximal
radius of the normal sphere contained in $M-\Omega $ which is tangent to $%
\partial \Omega $ at $x$ and set 
\begin{equation}
R=\inf \left\{ R\left( x\right) ;x\in \partial ^{-}\Omega \right\} .
\label{R}
\end{equation}%
Since $\partial ^{-}\Omega $ is compact, $R>0$.

Let $r\in (0,R)$. Given $x\in \partial ^{-}\Omega $ set 
\begin{equation}
x^{\ast }=\exp _{x}r\left( -\nu \left( x\right) \right) ,  \label{x*}
\end{equation}%
$B_{r}\left( x^{\ast }\right) $ the normal ball with center at $x^{\ast }$
and radius $r$ and consider the normal sphere $\Sigma =\partial B_{r}\left(
x^{\ast }\right) $. Let $\eta $ be the unit normal vector field to $\Sigma $
which points to $M-B_{r}\left( x^{\ast }\right) $. Let $\lambda _{i}$, $%
i=1,...,n-1$, be the principal curvatures of $\Sigma $ with respect to $\eta 
$ and set 
\begin{equation}
\lambda \left( x\right) =\min \left\{ \lambda _{i}\left( p\right) ,\text{ }%
i=1,...,n-1,p\in \Sigma \right\} .  \label{lambdax}
\end{equation}%
Let $II_{\partial \Omega }$ be the second fundamental form of $\partial
\Omega $ relatively to $\nu $ and let\textbf{\ }$II_{\Sigma }$ be the second
fundamental form of $\Sigma $ relatively to $\eta $. Notice that, at $x$, $%
\nu \left( x\right) =\eta \left( x\right) $, since $T_{x}\partial \Omega
=T_{x}\Sigma $. Set%
\begin{equation}
\varkappa \left( x\right) =\min \left\{ II_{\partial \Omega }\left( v\right)
-II_{\Sigma }\left( v\right) ;\text{ }v\in T_{x}\partial \Omega \text{, }%
\left\vert v\right\vert =1\right\} .  \label{vkapx}
\end{equation}

Now, consider the real numbers 
\begin{equation}
\lambda _{r}:=\inf \left\{ \lambda \left( x\right) ;\text{ }x\in \partial
^{-}\Omega \right\}  \label{lambda}
\end{equation}%
and 
\begin{equation}
\varkappa _{r}:=\inf \left\{ \varkappa \left( x\right) ;\text{ }x\in
\partial ^{-}\Omega \right\} .  \label{vkapr}
\end{equation}%
Notice that $\lambda _{r}<0$ and, since $r<R$, we have $\varkappa _{r}>0$.

Finally, consider $\varrho >0$ as the biggest number such that \newline
$\exp _{\partial \Omega }:\partial \Omega \times \lbrack 0,\varrho
)\longrightarrow \overline{\Omega }$ is a diffeomorphism and set%
\begin{equation}
\Omega _{\varrho }=\exp _{\partial \Omega }\left( \partial \Omega \times
\lbrack 0,\varrho )\right)  \label{ovr}
\end{equation}

We obtain the following extension of Williams' existence result.

\newpage

\begin{theorem}
\label{MT}Let $M^{n}$, $n\geq 2$, be a complete Riemannian manifold, $\Omega
\subset M$ be a $C^{2}$-domain with compact boundary and assume that $\func{%
Ric}_{\Omega _{\varrho }}\leq 0$.\newline
i) If $\partial ^{-}\Omega =\emptyset $, then the Dirichlet problem (\ref{PD}%
) has a bounded solution for any $f\in C^{0}\left( \partial \Omega \right) $.%
\newline
ii) If $\partial ^{-}\Omega \neq \emptyset $, assume additionally that $%
\func{Ric}_{\Omega _{\varrho }}\geq -\left( n-1\right) \mathfrak{H}_{\inf
}^{2}$. Then, given $r\in \left( 0,R\right) $, $a\in (0,\sqrt{\frac{r}{%
\left( n-1\right) \left\vert \lambda _{r}\right\vert }})$ and $K\in \lbrack
0,a\sqrt{\varkappa _{r}/r})$, where $R$, $\lambda _{r}$ and $\varkappa _{r}$
are given by (\ref{R}), (\ref{lambda}) and (\ref{vkapr}) respectively, there
is $0<\delta _{0}$ such that, for all $\delta \in (0,\delta _{0})$ there is $%
\epsilon =\epsilon \left( r,a,K,\delta ,\Omega ,\func{Ric}_{\Omega _{\varrho
}}\right) >0$ such that, if $f\in C^{0}\left( \partial \Omega \right) $
satisfies%
\begin{equation*}
\left\vert f\left( z\right) -f\left( x\right) \right\vert \leq Kd\left(
z,x\right) \text{, }x\in \partial ^{-}\Omega \text{, }z\in B_{\delta }\left(
x\right) \cap \partial \Omega
\end{equation*}%
and $osc\left( f\right) <\epsilon $, then the Dirichlet problem (\ref{PD})
has a bounded solution.\newline
Moreover, if $\Omega $ is bounded, the solutions mentioned in the items i)
and ii) are unique.
\end{theorem}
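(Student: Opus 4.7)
The approach is Perron's method adapted to the minimal hypersurface equation on Riemannian manifolds: once, at every $y\in \partial \Omega $, local upper and lower barriers for $\mathfrak{M}$ with boundary value $f(y)$ at $y$ are produced, the Perron envelope of the admissible subsolutions yields a solution of (\ref{PD}) in the spirit of the Riemannian minimal surface existence theorems cited in the introduction; interior gradient estimates (for which $\func{Ric}_{\Omega _{\varrho }}\leq 0$ is used) promote boundary continuity into the required $C^{2}$ regularity, and the classical comparison principle for $\mathfrak{M}$ gives uniqueness when $\Omega $ is bounded. The task therefore reduces to constructing the barriers at every boundary point.

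Part (i). When $\partial ^{-}\Omega =\emptyset $, every $y\in \partial \Omega $ satisfies $\mathfrak{H}(y)\geq 0$ and the classical exterior-tangent-sphere construction works for arbitrary continuous $f$: place a small normal ball in $M-\Omega $ tangent to $\partial \Omega $ at $y$ and take a concave radial function of the distance to its center. The mean convexity at $y$ is exactly what makes this a supersolution of $\mathfrak{M}$ with no smallness condition on $f$; the rest is the same Perron machinery underlying the Jenkins--Serrin-type Riemannian theorems already cited.

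Part (ii) is the core. Fix $x\in \partial ^{-}\Omega $ and let $x^{\ast }$ and $\Sigma =\partial B_{r}(x^{\ast })$ be as in the statement; write $\rho (p)=d(p,x^{\ast })$. Following Williams, look for an upper barrier
\[
v_{x}(p)=f(x)+\psi (\rho (p)-r)
\]
with $\psi $ smooth on $(0,L]$, positive, concave, and $\psi (0^{+})=0$. The supersolution inequality $\mathfrak{M}(v_{x})\leq 0$, after expanding the divergence in (\ref{PD}) and applying Hessian and Laplacian comparison to $\rho $ under the two-sided Ricci control $-(n-1)\mathfrak{H}_{\inf }^{2}\leq \func{Ric}\leq 0$ and the principal-curvature information encoded in $\lambda _{r}$, becomes a scalar ODE inequality for $\psi $; the existence of an admissible $\psi $ with amplitude parameter $a$ turns out to be equivalent to the restriction $a<\sqrt{r/((n-1)\left\vert \lambda _{r}\right\vert )}$. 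The boundary comparison $v_{x}\geq f$ on $\partial \Omega $ is then split in two regions. Inside $B_{\delta }(x)\cap \partial \Omega $ one combines the Lipschitz bound $\left\vert f(z)-f(x)\right\vert \leq Kd(z,x)$ with the geometric second-order expansion
\[
\rho (z)-r\geq \tfrac{\varkappa _{r}}{2}\,d(z,x)^{2}+O\bigl(d(z,x)^{3}\bigr),
\]
valid because $\varkappa _{r}>0$ for $r<R$; matching the $\sqrt{t}$-type growth of $\psi $ at $0$ against the inequality $f(z)-f(x)\leq K\sqrt{2(\rho (z)-r)/\varkappa _{r}}+\cdots $ produces the upper bound $K<a\sqrt{\varkappa _{r}/r}$. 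Outside $B_{\delta }(x)\cap \partial \Omega $ the positive minimum of $\rho -r$ absorbs $osc(f)$, which fixes the threshold $\epsilon =\epsilon (r,a,K,\delta ,\Omega ,\func{Ric}_{\Omega _{\varrho }})$. Lower barriers are obtained symmetrically by replacing $f$ with $-f$, and $\delta _{0}$ is chosen so small that the Taylor expansion and the ODE analysis are simultaneously valid for all $\delta \in (0,\delta _{0})$ and all $x\in \partial ^{-}\Omega $.

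The hardest point is this simultaneous matching: the ODE inequality fixes the admissible $a$ through the Ricci bounds and $\lambda _{r}$, the boundary comparison fixes the admissible $K$ through $\varkappa _{r}$, and both must be realized by a single profile $\psi $. The uniform positivity of $\varkappa _{r}$ on the compact set $\partial ^{-}\Omega $ (which holds precisely because $r<R$), combined with the continuity of the geometric quantities involved, is what allows a single pair $(\delta _{0},\epsilon )$ to serve every $x\in \partial ^{-}\Omega $ simultaneously, completing the reduction to the Perron-type existence theorem.
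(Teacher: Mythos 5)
Your part (ii) is essentially the paper's argument: Perron envelope, radial barriers $f(x)\pm\psi\bigl(d(\cdot,x^{\ast})\bigr)$ centered at $x^{\ast}=\exp_{x}\bigl(-r\nu(x)\bigr)$, an ODE inequality for $\psi$ obtained from Laplacian comparison under $\func{Ric}_{\Omega_{\varrho}}\geq-(n-1)\mathfrak{H}_{\inf}^{2}$ (the paper's Lemmas \ref{LGeral} and \ref{Lcase}, with the explicit profile $\psi(s)=a\cosh^{-1}(s/r)$ and the amplitude constraint $a<\sqrt{r/((n-1)|\lambda_{r}|)}$), the second-order expansion of $d(\cdot,x^{\ast})-r$ along $\partial\Omega$ in terms of $\varkappa_{r}$ (Lemma \ref{boa}), the matching of the square-root growth of $\psi$ against the Lipschitz bound to obtain $K<a\sqrt{\varkappa_{r}/r}$, and compactness of $\partial^{-}\Omega$ to make $\delta_{0},\epsilon$ uniform. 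This is the same route and the same key geometric lemmas.

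There is, however, a real gap in your part (i) and at the points of $\partial\Omega\setminus\partial^{-}\Omega$ with $\mathfrak{H}(x)=0$. You claim that an exterior tangent normal ball together with a concave radial function yields a supersolution with boundary value $f(y)$ for \emph{arbitrary} continuous $f$, attributing this to mean convexity. But the exterior-ball radial barrier is precisely the construction analyzed in Lemmas \ref{LGeral}--\ref{Lcase}, and it comes with the amplitude restriction $a<\sqrt{r/((n-1)|\lambda_{r}|)}$, i.e.\ it can only absorb a bounded oscillation of $f$. Mean convexity at $y$ does not remove that restriction for this construction. The paper instead treats mean convex boundary points by a different device: choose a small mean convex $C^{2}$-subdomain $B\subset\Omega$ with $\overline{B}\cap\partial\Omega$ a neighborhood of $x$, solve the Dirichlet problem on $B$ for suitably chosen data, and truncate the resulting solutions $g_{x}^{\pm}$ to get $\mathfrak{G}_{x}^{\pm}$. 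This handles arbitrary continuous $f$ precisely because the Dirichlet problem is solvable on mean convex Riemannian domains for all continuous data. The paper also points out that the hypothesis $\func{Ric}_{\Omega_{\varrho}}\leq 0$ is needed exactly at the borderline points $\mathfrak{H}(x)=0$ for this construction; you instead attribute $\func{Ric}_{\Omega_{\varrho}}\leq 0$ to the interior gradient estimate, which is not where the paper invokes it. Replacing your part (i) barrier with the subdomain-Dirichlet construction, and reassigning the role of $\func{Ric}_{\Omega_{\varrho}}\leq 0$, would bring the proposal fully in line with a correct proof.
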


\begin{remark}
For $r\in \left( 0,R\right) $ and $a\in (0,\sqrt{\frac{r}{\left( n-1\right)
\left\vert \lambda _{r}\right\vert }})$ we have $a\sqrt{\varkappa _{r}/r}<%
\frac{1}{\sqrt{n-1}}$ (see Lemma \ref{m1}). When $M=\mathbb{R}^{n}$ we have $%
\left\vert \lambda _{r}\right\vert =1/r$ and, therefore, 
\begin{equation*}
\sqrt{\frac{r}{\left( n-1\right) \left\vert \lambda _{r}\right\vert }}=\frac{%
r}{\sqrt{n-1}}\text{ and }\frac{1}{r}\sqrt{\frac{R-r}{R}}\leq \sqrt{%
\varkappa _{r}/r}.
\end{equation*}%
It follows that, given $K\in \left( 0,\frac{1}{\sqrt{n-1}}\right) $ there
are $r\in \left( 0,R\right) $ and $a<\frac{r}{\sqrt{n-1}}$ such that $K<a%
\sqrt{\varkappa _{r}/r}$. Therefore, Theorem \ref{MT} extends to Riemannian
manifolds the existence results of G. H. Williams given in the Corollary 1
of \cite{W}.
\end{remark}

\begin{remark}
If the domain is strictly mean convex, the hypothesis on the Ricci curvature
is not necessary.
\end{remark}

\section{Barriers}

Consider the set 
\begin{equation}
S=\left\{ 
\begin{array}{c}
v\in C^{0}\left( \overline{\Omega }\right) ;v\text{ is subsolution of }%
\mathfrak{M}\text{,} \\ 
v\left( z\right) \leq f\left( z\right) \text{ }\forall z\in \partial \Omega 
\text{, }\sup_{\Omega }v\leq \sup_{\partial \Omega }f%
\end{array}%
\right\} .  \label{S}
\end{equation}%
Note that $S\neq \emptyset $ ($v=\inf_{\partial \Omega }f\in S$) and that
any function of\textbf{\ }$S$ is bounded from above by $w=\sup_{\partial
\Omega }f$. The function 
\begin{equation}
u\left( z\right) =\sup \left\{ v\left( z\right) ;v\in S\right\} \text{, }%
z\in \Omega  \label{Ps}
\end{equation}%
is then well defined. From Perron's method it follows that $u\in C^{2}\left(
\Omega \right) $ and $\mathfrak{M}\left( u\right) =0$ (see \cite{GT} and
Section 2 of \cite{RTe}). We will prove that $u\in C^{0}\left( \overline{%
\Omega }\right) $ and $u|_{\partial \Omega }=f$. Our main work is to
construct barriers relatively to the points $x\in \partial ^{-}\Omega $.

Given $r\in (0,R)$ and $x\in \partial ^{-}\Omega $, let $x^{\ast }$ and $%
\Sigma $ be as defined in (\ref{x*}) and set 
\begin{equation*}
d\left( z\right) =d\left( z,x^{\ast }\right) \text{, }z\in M\text{,}
\end{equation*}%
where $d$ is the Riemannian distance in $M$. Denote by $\rho \left( x\right) 
$ the largest positive number such that 
\begin{equation*}
\exp _{x^{\ast }}:B_{r+\rho \left( x\right) }\left( 0\right) \subset
T_{x^{\ast }}M\longrightarrow \exp _{x^{\ast }}\left( B_{r+\rho \left(
x\right) }\left( 0\right) \right)
\end{equation*}%
is a diffeomorphism (for Hadamard manifold $\rho \left( x\right) =\infty $)
and set 
\begin{eqnarray}
A_{r}^{r+\rho (x)} &:&=\exp _{x^{\ast }}\left( B_{r+\rho \left( x\right)
}\left( 0\right) -B_{r}\left( 0\right) \right)  \label{Ar} \\
&=&\left\{ z\in M-B_{r}\left( x^{\ast }\right) ;\text{ }r\leq d(z)<r+\rho
\left( x\right) \right\} \text{.}  \notag
\end{eqnarray}%
Now, consider the number%
\begin{equation}
\rho :=\min \left\{ \varrho ,\inf \left\{ \rho \left( x\right) ;x\in
\partial ^{-}\Omega \right\} \right\} \text{, }  \label{ro}
\end{equation}%
where $\varrho $ is given in (\ref{ovr}).

In all results of this section, we are considering the following context:

Let $r\in \left( 0,R\right) $ and let $x\in \partial ^{-}\Omega $ be an
arbitrary but fixed point. Let $x^{\ast }$, $\Sigma $ be as defined in (\ref%
{x*}) and let $A_{r}^{r+\rho }$ be defined by (\ref{Ar}) and (\ref{ro}). At $%
z\in A_{r}^{r+\rho }$, consider an orthonormal referential frame $\left\{
E_{i}\right\} $, $i=1,...,n$, where $E_{n}=\nabla d$.

\begin{lemma}
\label{LGeral}Assume $\func{Ric}_{\Omega _{\varrho }}>-\left( n-1\right) 
\mathfrak{H}_{\inf }^{2}$, where $\Omega _{\varrho }$ and $\mathfrak{H}%
_{\inf }$ are given by (\ref{ovr}) and (\ref{infb}), respectively. Given $%
\psi \in C^{2}\left( [r,\infty \right) )$, consider $w\in C^{2}\left(
A_{r}^{r+\rho }\right) $ given by 
\begin{equation}
w\left( z\right) =\left( \psi \circ d\right) \left( z\right) .  \label{w}
\end{equation}%
We have $\mathfrak{M}\left( w\right) \leq 0$ in $A_{r}^{r+\rho }\cap \Omega $%
\emph{\ }if%
\begin{equation}
\psi ^{\prime \prime }+\left( \psi ^{\prime }+\left[ \psi ^{\prime }\right]
^{3}\right) \left( n-1\right) \left\vert \lambda _{r}\right\vert \leq 0,
\label{eg}
\end{equation}%
where $\lambda _{r}$ is given by (\ref{lambda}).
\end{lemma}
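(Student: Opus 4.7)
The plan is to prove the lemma in two stages: a direct pointwise computation reducing $\mathfrak{M}(w) \leq 0$ to a Laplacian estimate on $d$, followed by a Riccati-type comparison establishing that estimate. In the first stage, writing $\mathrm{grad}\,w = \psi'(d)\,\mathrm{grad}\,d$ and $W := \sqrt{1+(\psi')^2}$ and combining the divergence product rule with the elementary identity $\frac{d}{ds}(\psi'(s)/W(s)) = \psi''(s)/W(s)^3$, the fact that $|\mathrm{grad}\,d| = 1$, and $\mathrm{div}(\mathrm{grad}\,d) = \Delta d$, the computation collapses to
$$\mathfrak{M}(w) \;=\; \frac{1}{W^{3}}\Bigl(\psi''(d) \;+\; \bigl(\psi'(d) + \psi'(d)^{3}\bigr)\,\Delta d\Bigr).$$
Since $W > 0$, and in the regime $\psi' \geq 0$ that is the one relevant for the barrier construction to come, the conclusion $\mathfrak{M}(w) \leq 0$ follows from hypothesis (\ref{eg}) as soon as one has the Laplacian upper bound
$$\Delta d(z) \;\leq\; (n-1)|\lambda_{r}| \qquad \text{for all } z \in A_{r}^{r+\rho}\cap \Omega. \qquad (\star)$$

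The heart of the proof is $(\star)$. For fixed $z\in A_{r}^{r+\rho}\cap \Omega$, I would look along the unit-speed minimizing geodesic $\gamma$ from $x^{\ast}$ to $z$; by the definition of $\rho$, $d(z) < r+\rho$ is strictly inside the injectivity radius of $x^{\ast}$, so $d$ is smooth along $\gamma$ and $\phi(t) := \Delta d(\gamma(t))$ is $C^{1}$ on $[r,d(z)]$. Applying the Bochner identity to $|\mathrm{grad}\,d|^{2} = 1$ gives $\phi'(t) = -|\mathrm{Hess}\,d|^{2} - \mathrm{Ric}(\dot\gamma,\dot\gamma)$, and combining the Cauchy--Schwarz bound $|\mathrm{Hess}\,d|^{2} \geq \phi(t)^{2}/(n-1)$ (valid because $\mathrm{Hess}\,d$ annihilates $\mathrm{grad}\,d$) with the hypothesis $\mathrm{Ric} > -(n-1)\mathfrak{H}_{\inf}^{2}$ yields the scalar Riccati inequality
$$\phi'(t) \;<\; -\frac{\phi(t)^{2}}{n-1} + (n-1)\mathfrak{H}_{\inf}^{2}.$$
For the initial value, on $\Sigma$ one has $\mathrm{Hess}\,d(E_{i},E_{i}) = -\lambda_{i}$; together with $\lambda_{i} \geq \lambda_{r}$ from (\ref{lambda}), this gives $\phi(r) \leq (n-1)|\lambda_{r}|$. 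A short geometric step then closes the argument: since $\Sigma$ is tangent to $\partial\Omega$ from the $M\setminus\Omega$ side and $\varkappa_{r} > 0$ forces $II_{\Sigma}(x) < II_{\partial\Omega}(x)$ at each $x \in \partial^{-}\Omega$, taking traces gives $|H_{\Sigma}(x)| > |\mathfrak{H}(x)|$ (both being negative), and $H_{\Sigma}(x) \geq \lambda_{r}$ then yields $|\lambda_{r}| \geq |H_{\Sigma}(x)| > |\mathfrak{H}(x)|$ for every $x\in\partial^{-}\Omega$, whence $|\lambda_{r}| \geq |\mathfrak{H}_{\inf}|$. This makes the constant $(n-1)|\lambda_{r}|$ a supersolution of the Riccati ODE, and scalar ODE comparison then gives $\phi(t) \leq (n-1)|\lambda_{r}|$ on $[r, d(z)]$, which is $(\star)$.

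The main obstacle is $(\star)$: while each ingredient --- Bochner, Cauchy--Schwarz, identification of the initial datum with $(n-1)|\lambda_{r}|$ through the shape operator of $\Sigma$, the sphere-tangency inequality $|\lambda_{r}| \geq |\mathfrak{H}_{\inf}|$, and scalar ODE comparison --- is individually standard, they must be assembled carefully, and one has to verify that the comparison geodesic from $x^{\ast}$ remains in regions where the Ricci hypothesis is available (the choice $\rho \leq \varrho$ in (\ref{ro}) is what enforces this). Once $(\star)$ is in hand, substituting into the first-stage formula and invoking (\ref{eg}) with $\psi' + (\psi')^{3} \geq 0$ immediately yields $\mathfrak{M}(w) \leq 0$ and finishes the proof.
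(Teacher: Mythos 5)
Your reduction to the Laplacian estimate $\Delta d \leq (n-1)|\lambda_r|$ on $A_r^{r+\rho}\cap\Omega$ is exactly the first step the paper takes, but the way you establish that estimate is genuinely different. The paper builds a specific one-dimensional comparison function $f(t) = k\sinh\bigl(\coth^{-1}(|\lambda_r|/k)+kt\bigr)$ (using the strict inequality in the Ricci hypothesis to pick $k$ with $0<k<|\mathfrak{H}_{\inf}|\leq|\lambda_r|$ and $\func{Ric}\geq -(n-1)k^2$), verifies $f''/f=k^2$ and $\lambda_r \geq -f'(0)/f(0)$, and then invokes Theorem~5.1 of Klaser--Ripoll to conclude $\Delta d/(n-1) \leq f'/f \leq |\lambda_r|$. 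You instead run the Bochner identity directly along the radial geodesic, use Cauchy--Schwarz (valid since $\operatorname{Hess}d$ annihilates $\nabla d$) to obtain the scalar Riccati inequality, compute the initial value $\phi(r)\leq(n-1)|\lambda_r|$ from $\operatorname{Hess}d=-II_\Sigma$ on $\Sigma$, verify $|\lambda_r|\geq|\mathfrak{H}_{\inf}|$ (which the paper asserts without argument; your sphere-tangency argument supplies it), and close with ODE comparison against the constant supersolution. Your route is self-contained where the paper's is a citation, and the constant-supersolution observation is arguably cleaner than exhibiting the explicit $\sinh$-barrier; what you lose is that the paper's choice exhibits the sharper intermediate bound $\Delta d \leq (n-1)k\coth(\cdot)$, which is monotone decreasing in $t$ and closer to the model-space identity. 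One point worth making explicit: both your proof and the paper's implicitly require $\psi'\geq 0$ in the final substitution, since $(\ref{eg})$ with the bound $\Delta d\leq(n-1)|\lambda_r|$ only yields $\psi''+(\psi'+[\psi']^3)\Delta d\leq 0$ when $\psi'+[\psi']^3\geq 0$; you flag this, the paper does not, and it is satisfied by the $\cosh^{-1}$-barrier used in Lemma~\ref{Lcase}.
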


\begin{proof}
Straightforward calculus give us that $\mathfrak{M}\left( w\right) \leq 0$
in $A_{r}^{r+\rho }\cap \Omega $\emph{\ }if 
\begin{equation*}
\psi ^{\prime \prime }+\left( \psi ^{\prime }+\left[ \psi ^{\prime }\right]
^{3}\right) \Delta d\leq 0,
\end{equation*}%
where $\Delta $ is the Laplacian in $M$. Since $\func{Ric}_{\Omega _{\varrho
}}>-\left( n-1\right) \mathfrak{H}_{\inf }^{2}$, there is \newline
$0<k<\left\vert \mathfrak{H}_{\inf }\right\vert \leq |\lambda _{r}|$ such
that 
\begin{equation}
\func{Ric}_{\Omega _{\varrho }}\geq -\left( n-1\right) k^{2}.  \label{Ric}
\end{equation}%
Define $f:\left[ 0,\rho \right] \rightarrow \left( 0,+\infty \right) $ by%
\begin{equation*}
f\left( t\right) =k\sinh \left( \coth ^{-1}\left( \frac{\left\vert \mathfrak{%
\lambda }_{r}\right\vert }{k}\right) +kt\right) ,
\end{equation*}%
We have 
\begin{equation}
\frac{f^{\prime \prime }\left( t\right) }{f\left( t\right) }=k^{2},t\in %
\left[ 0,\rho \right] .  \label{f2f}
\end{equation}%
Let $H_{t}$ be the mean curvature of $P_{t}:=\left\{ z\in A_{r}^{r+\rho
};d\left( z\right) =r+t\right\} \subset A_{r}^{r+\rho }$ with respect to the
normal given by $E_{n}=\nabla d$. As $P_{0}=\Sigma $, it follows that

\begin{equation}
H_{0}\geq \lambda _{r}\geq -\frac{f^{\prime }\left( 0\right) }{f\left(
0\right) }.  \label{f1f}
\end{equation}%
Let $\gamma :\left[ 0,\rho \right] \longrightarrow A_{r}^{r+\rho }$ be the
arc length geodesic such that $\gamma \left( 0\right) \in P_{0}$ and $\gamma
^{\prime }\left( t\right) =\nabla d\left( \gamma \left( t\right) \right) $.
We have from (\ref{Ric}) and (\ref{f2f}) that 
\begin{equation}
\func{Ric}_{M}\left( \gamma ^{\prime }\left( t\right) ,\gamma ^{\prime
}\left( t\right) \right) \geq -\left( n-1\right) \frac{f^{\prime \prime
}\left( t\right) }{f\left( t\right) }  \label{c2}
\end{equation}%
for all $t\in \left[ 0,\rho \right] $. Since $\nabla d$ is an extension of $%
\eta $ to $A_{r}^{r+\rho }$ and, in presence of (\ref{f1f}), (\ref{c2}), it
follows from Theorem 5.1 of \cite{KR} that%
\begin{equation*}
-H_{t}\left( \gamma \left( t\right) \right) =\frac{\Delta d\left( \gamma
\left( t\right) \right) }{\left( n-1\right) }\leq \frac{f^{\prime }\left(
t\right) }{f\left( t\right) }.
\end{equation*}%
Then%
\begin{equation*}
\Delta d\leq \left( n-1\right) \frac{f^{\prime }\left( t\right) }{f\left(
t\right) }=\left( n-1\right) k\coth \left( \coth ^{-1}\left( \frac{%
\left\vert \lambda _{r}\right\vert }{k}\right) +kt\right) \leq \left(
n-1\right) \left\vert \mathfrak{\lambda }_{r}\right\vert .
\end{equation*}
\end{proof}

\begin{lemma}
\label{Lcase}Given 
\begin{equation}
0<a<\sqrt{\frac{r}{\left( n-1\right) \left\vert \lambda _{r}\right\vert }}
\label{cota_a}
\end{equation}%
set 
\begin{equation}
s_{0}=\frac{1+\sqrt{1-4\left( n-1\right) ^{2}\lambda _{r}^{2}\left(
a^{2}-r^{2}\right) }}{2\left( n-1\right) \left\vert \lambda _{r}\right\vert }%
.  \label{alfa}
\end{equation}%
Then, the function $w\left( z\right) =a\cosh ^{-1}\left( \frac{d\left(
z\right) }{r}\right) $ satisfies $\mathfrak{M}\left( w\right) \leq 0$ in $%
A_{r}^{r+\mu }\cap \Omega $, where 
\begin{equation}
\mu =\min \left\{ \rho ,s_{0}-r\right\} .  \label{eps}
\end{equation}
\end{lemma}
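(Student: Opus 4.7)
The plan is to reduce this statement to a direct application of Lemma \ref{LGeral} with the specific radial profile $\psi(t) = a\cosh^{-1}(t/r)$, and then to identify exactly the range of $t = d(z)$ on which the resulting differential inequality (\ref{eg}) holds.

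First I would compute $\psi'(t) = a/\sqrt{t^2-r^2}$, $\psi''(t) = -at/(t^2-r^2)^{3/2}$, and hence $[\psi'(t)]^3 = a^3/(t^2-r^2)^{3/2}$. Substituting these into the left-hand side of (\ref{eg}) and clearing the positive factor $(t^2-r^2)^{3/2}/a$, the inequality to be verified becomes the quadratic inequality
\begin{equation*}
P(t) := (n-1)|\lambda_r|\,t^2 - t + (n-1)|\lambda_r|(a^2-r^2) \le 0.
\end{equation*}
Since $P$ has positive leading coefficient, the set $\{P \le 0\}$ is exactly the closed interval between its two real roots, and a direct application of the quadratic formula shows that the larger root coincides with the quantity $s_0$ defined in (\ref{alfa}).

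To finish, I need only check that the values taken by $d$ on $A_r^{r+\mu}$, namely $[r, r+\mu]$, lie in this region of negativity. Evaluating $P$ at $t = r$ gives
\begin{equation*}
P(r) = (n-1)|\lambda_r|\,a^2 - r,
\end{equation*}
which is strictly negative precisely by the hypothesis (\ref{cota_a}) on $a$. Therefore $r$ lies strictly between the two roots of $P$; in particular $r < s_0$, and $P \le 0$ on the whole interval $[r, s_0]$. Since by definition $\mu \le s_0 - r$, we get $[r, r+\mu] \subset [r, s_0]$, so the hypothesis of Lemma \ref{LGeral} is satisfied on $A_r^{r+\mu}$ and the conclusion $\mathfrak{M}(w) \le 0$ follows.

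There is no substantial obstacle: the content of the lemma is essentially to identify the explicit range of validity of the radial barrier, and the bound (\ref{cota_a}) is calibrated exactly so that $P(r) < 0$, placing the sphere $\Sigma$ strictly inside the admissible interval. The only point demanding any care is keeping track of signs (recall $\lambda_r < 0$) when factoring the quadratic, to be sure one identifies the correct root as $s_0$.
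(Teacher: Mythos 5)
Your proposal is correct and follows essentially the same route as the paper: compute $\psi'$, $\psi''$ for $\psi(t)=a\cosh^{-1}(t/r)$, plug into inequality (\ref{eg}) from Lemma \ref{LGeral}, clear the positive factor $(t^2-r^2)^{3/2}/a$ to reduce to the quadratic inequality $(n-1)|\lambda_r|t^2 - t + (n-1)|\lambda_r|(a^2-r^2)\le 0$, and observe that (\ref{cota_a}) is exactly $P(r)<0$, placing $r$ strictly between the roots with larger root $s_0$. The only cosmetic difference is that you phrase the endpoint analysis in terms of the sign of $P(r)$ and the shape of the parabola, whereas the paper phrases it as requiring strictness of (\ref{e-a}) for $s$ near $r$; these are the same calculation.
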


\begin{proof}
Let $\psi \left( s\right) =\alpha \cosh ^{-1}\left( \frac{s}{r}\right) ,$ $%
s:=d\left( z\right) >r$, where $\alpha >0$ is to be determined. We have%
\begin{equation*}
\psi ^{\prime }\left( s\right) =\frac{\alpha }{\left( s^{2}-r^{2}\right)
^{1/2}}\text{, }\psi ^{\prime \prime }\left( s\right) =\frac{-\alpha s}{%
\left( s^{2}-r^{2}\right) ^{3/2}}
\end{equation*}%
and then, from (\ref{eg}) of Lemma \ref{LGeral}, since $w=\psi \circ d$, $%
\mathfrak{M}\left( w\right) \leq 0$ if%
\begin{equation*}
\frac{-\alpha s}{\left( s^{2}-r^{2}\right) ^{3/2}}+\left[ \frac{\alpha }{%
\left( s^{2}-r^{2}\right) ^{1/2}}+\frac{\alpha ^{3}}{\left(
s^{2}-r^{2}\right) ^{3/2}}\right] \left( n-1\right) \left\vert \lambda
_{r}\right\vert \leq 0,
\end{equation*}%
that is, if 
\begin{equation*}
-s+\left[ s^{2}-r^{2}+\alpha ^{2}\right] \left( n-1\right) \left\vert
\lambda _{r}\right\vert \leq 0,
\end{equation*}%
that is, if 
\begin{equation}
\left( n-1\right) \left\vert \lambda _{r}\right\vert s^{2}-s+\left(
n-1\right) \left\vert \lambda _{r}\right\vert \left( \alpha
^{2}-r^{2}\right) \leq 0.  \label{e-a}
\end{equation}%
In order to get the desirable neighborhood, we need that for $s$ near $r$, $%
s>r$, the inequality (\ref{e-a}) to be strict and that is the case if 
\begin{equation*}
\alpha <\sqrt{\frac{r}{\left( n-1\right) \left\vert \lambda _{r}\right\vert }%
}.
\end{equation*}%
(notice that $\sqrt{\frac{r}{\left( n-1\right) \left\vert \lambda
_{r}\right\vert }}<\frac{\sqrt{1+4\left( n-1\right) ^{2}\lambda _{r}^{2}r^{2}%
}}{2\left( n-1\right) \left\vert \lambda _{r}\right\vert }$). Then, taking $%
\alpha =a$ satisfying (\ref{cota_a}), it follows that for $s\in \lbrack
r,s_{0}]$, where $s_{0}>r$ is given by (\ref{alfa}), the inequality (\ref%
{e-a}) is true, and this concludes the proof.
\end{proof}

\begin{lemma}
\label{boa}Let $a\in (0,\sqrt{\frac{r}{\left( n-1\right) \left\vert \lambda
_{r}\right\vert }})$ and $0<\varepsilon <r\varkappa _{r}$ be given, where $%
\varkappa _{r}$ is given by (\ref{vkapr}). Then, there exist $\delta _{1}>0$
such that $w\left( z\right) =a\cosh ^{-1}\left( \frac{d\left( z\right) }{r}%
\right) $ as defined in Lemma \ref{Lcase} satisfies%
\begin{equation}
w\left( z\right) \geq ar^{-1}\left( r\varkappa _{r}-\varepsilon \right)
^{1/2}d\left( z,x\right) +o\left( d\left( z,x\right) \right) \text{, }z\in
B_{\delta _{1}}\left( x\right) \cap \partial \Omega \text{.}  \label{estboa}
\end{equation}
\end{lemma}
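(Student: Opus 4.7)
The plan is a second-order Taylor expansion of $z \mapsto d(z,x^*)$ restricted to $\partial\Omega$ near $x$, combined with the elementary asymptotic $\cosh^{-1}(1+u) = \sqrt{2u}\,(1+O(u))$ as $u \to 0^+$.

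Fix a unit vector $v \in T_x\partial\Omega = T_x\Sigma$ and let $\gamma$ be the $\partial\Omega$-geodesic with $\gamma(0) = x$ and $\gamma'(0) = v$. Set $g(t) = d(\gamma(t), x^*)$, so $g(0) = r$. Since $\nabla d(x) = \eta(x) = \nu(x)$ is orthogonal to $T_x\partial\Omega$, we have $g'(0) = \langle v, \nabla d(x)\rangle = 0$. For the second derivative,
\[
g''(0) = \mathrm{Hess}\, d(v, v) + \langle (\nabla_{\gamma'}\gamma')(0), \nabla d(x)\rangle.
\]
The first summand equals $-II_\Sigma(v,v)$ by the standard relation between the Hessian of a distance function and the second fundamental form of its level sets (valid since $v \in T_x\Sigma$). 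The second equals $II_{\partial\Omega}(v,v)$ because $\gamma$ is a geodesic in $\partial\Omega$, hence $(\nabla_{\gamma'}\gamma')(0) = II_{\partial\Omega}(v,v)\,\nu(x)$. Therefore
\[
g''(0) = II_{\partial\Omega}(v,v) - II_\Sigma(v,v) \geq \varkappa(x) \geq \varkappa_r > 0.
\]

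Taylor's theorem then yields $g(t) \geq r + \tfrac{1}{2}\varkappa_r t^2 + O(t^3)$ for $|t|$ small. Given $z \in \partial\Omega$ near $x$, apply this with $\gamma$ the minimizing $\partial\Omega$-geodesic from $x$ to $z$; since $t := d_{\partial\Omega}(z,x) = d(z,x) + O(d(z,x)^3)$,
\[
d(z, x^*) - r \geq \tfrac{1}{2}\varkappa_r\, d(z,x)^2 + O(d(z,x)^3).
\]
Setting $u = (d(z,x^*) - r)/r$ and using $\cosh^{-1}(1+u) \geq \sqrt{2u}\,(1 - O(u))$ gives
\[
w(z) = a\cosh^{-1}(d(z,x^*)/r) \geq a\sqrt{\varkappa_r/r}\, d(z,x) + o(d(z,x)).
\]
Since $\varepsilon > 0$, $a\sqrt{\varkappa_r/r} = a r^{-1}(r\varkappa_r)^{1/2} > a r^{-1}(r\varkappa_r - \varepsilon)^{1/2}$; choosing $\delta_1$ small enough so that the $o(d(z,x))$ remainder absorbs the strict gap then delivers \eqref{estboa}.

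The only delicate step is the identification $g''(0) = II_{\partial\Omega}(v,v) - II_\Sigma(v,v)$, which encodes the geometric content of $\varkappa$ as the infinitesimal gap between $\partial\Omega$ and $\Sigma$ at the tangency point. Everything else is a routine Taylor expansion paired with the $\cosh^{-1}$ asymptotic.
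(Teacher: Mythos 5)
Your proof is essentially the same as the paper's, just stated in a slightly different coordinate gauge. The paper works with the function $g(z)=d(z,x^\ast)^2-r^2$ rather than $d(z,x^\ast)$ directly, and moves along an arbitrary arc-length curve in $\partial\Omega$ rather than a $\partial\Omega$-geodesic; after the chain rule these reduce to identical statements, since at $x$ one has $\operatorname{Hess}g=2r\operatorname{Hess}d$ on $T_x\Sigma$, and the tangential part of $\nabla_{\alpha'}\alpha'$ is killed by $\langle\cdot,\nu(x)\rangle$ regardless of whether $\alpha$ is a geodesic. The key identity $g''(0)=II_{\partial\Omega}(v)-II_\Sigma(v)\ge\varkappa(x)\ge\varkappa_r$ and the subsequent use of $\sigma\ge d(z,x)$ together with the $\cosh^{-1}$ asymptotics match the paper line by line.

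Two small imprecisions worth flagging. First, with only $C^2$ regularity of $\partial\Omega$ the Taylor remainder in $g(t)=r+\tfrac12 g''(0)t^2+\vartheta(t)$ is $\vartheta(t)=o(t^2)$, not $O(t^3)$; similarly, $d_{\partial\Omega}(z,x)-d(z,x)$ is not necessarily $O(d(z,x)^3)$. This does not harm the argument, since the $\varepsilon$-slack in the target constant $ar^{-1}(r\varkappa_r-\varepsilon)^{1/2}$ is exactly what absorbs the $o$-loss, as you note at the end — but the displayed inequality $w(z)\ge a\sqrt{\varkappa_r/r}\,d(z,x)+o(d(z,x))$ as an intermediate step should really be phrased with a $(\varkappa_r-\epsilon')$-type constant (the paper tracks this explicitly via its $\vartheta(\sigma)$ bound). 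Second, the $\delta_1$ must be chosen uniformly over the unit directions $v\in T_x\partial\Omega$; this follows from compactness of the unit sphere and continuity, but neither you nor the paper makes it fully explicit.
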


\begin{proof}
Let $g:M\longrightarrow \mathbb{R}$ given by $g\left( z\right) =d\left(
z\right) ^{2}-r^{2}$. Then $\Sigma =g^{-1}\left( 0\right) $. Given $v\in
T_{x}\partial \Omega =T_{x}\Sigma $, $\left\vert v\right\vert =1$, let $Y$
be an extension of $v$ which is tangent to $\Sigma $, that is, $Y\in 
\mathfrak{X}(\Sigma )$ and set $X=\nabla g\left\vert \nabla g\right\vert
^{-1}$. Note that $X$ is an extension to $M$ of the unit normal vector field 
$\eta $, that is, $X|_{\Sigma }=\eta $. As $Y\left( g|_{\Sigma }\right) =0$
and $\nabla g|_{\Sigma }=2r\eta $, setting $\nabla $ the Riemannian
connection of $M$, it follows that on $\Sigma $,%
\begin{eqnarray*}
Hessg(Y,Y) &=&-\left\langle \nabla _{Y}Y,\nabla g\right\rangle
=-\left\langle \nabla _{Y}Y,\eta \right\rangle \left\vert \nabla g\right\vert
\\
&=&-2r\left\langle \nabla _{Y}Y,\eta \right\rangle =-2r\left\langle \left[
\nabla _{Y}Y\right] ^{T}+B\left( Y,Y\right) ,\eta \right\rangle \\
&=&-2r\left\langle B\left( Y,Y\right) ,\eta \right\rangle =-2rII_{\Sigma
}\left( Y\right)
\end{eqnarray*}%
where $II_{\Sigma }$ is the second fundamental form relatively to $\Sigma $
with respect to $\eta $. Therefore, at $x$ we have%
\begin{equation}
Hessg\left( v,v\right) =-2rII_{\Sigma }\left( v\right) .  \label{kns}
\end{equation}%
Let $\alpha :\left[ 0,l\right] \longrightarrow \partial \Omega $, $l>0$, be
an arc length parametrized and simple curve (in the induced metric), such
that $\alpha \left( 0\right) =x$, $\alpha \left( l\right) \neq x$ and $%
\alpha ^{\prime }\left( 0\right) =v$. Let $\sigma $ the arc length parameter
and define%
\begin{equation*}
\xi :\left[ 0,l\right] \longrightarrow \mathbb{R}
\end{equation*}%
by $\xi \left( \sigma \right) =g\left( \alpha \left( \sigma \right) \right) $%
. We have 
\begin{eqnarray*}
\xi ^{\prime }\left( \sigma \right) &=&2d\left( \alpha \left( \sigma \right)
\right) \left\langle \nabla d\left( \alpha \left( \sigma \right) \right)
,\alpha ^{\prime }\left( \sigma \right) \right\rangle = \\
&=&\left\langle \nabla g\left( \alpha \left( \sigma \right) \right) ,\alpha
^{\prime }\left( \sigma \right) \right\rangle
\end{eqnarray*}%
and, relatively to $\partial \Omega $, 
\begin{eqnarray*}
\xi ^{\prime \prime }\left( \sigma \right) &=&\left\langle \nabla _{\alpha
^{\prime }\left( \sigma \right) }\nabla g\left( \alpha \left( \sigma \right)
\right) ,\alpha ^{\prime }\left( \sigma \right) \right\rangle +\left\langle
\nabla g\left( \alpha \left( \sigma \right) \right) ,\nabla _{\alpha
^{\prime }\left( \sigma \right) }\alpha ^{\prime }\left( \sigma \right)
\right\rangle \\
&=&\left\langle \nabla _{\alpha ^{\prime }\left( \sigma \right) }\nabla
g\left( \alpha \left( \sigma \right) \right) ,\alpha ^{\prime }\left( \sigma
\right) \right\rangle + \\
&&+\left\langle \nabla g\left( \alpha \left( \sigma \right) \right) ,\left[
\nabla _{\alpha ^{\prime }\left( \sigma \right) }\alpha ^{\prime }\left(
\sigma \right) \right] ^{T}+B\left( \alpha ^{\prime }\left( \sigma \right)
,\alpha ^{\prime }\left( \sigma \right) \right) \right\rangle \\
&=&\left\langle \nabla _{\alpha ^{\prime }\left( \sigma \right) }\nabla
g\left( \alpha \left( \sigma \right) \right) ,\alpha ^{\prime }\left( \sigma
\right) \right\rangle + \\
&&+\left\vert \nabla g\left( \alpha \left( \sigma \right) \right)
\right\vert \left\langle X\left( \alpha \left( \sigma \right) \right) ,\left[
\nabla _{\alpha ^{\prime }\left( \sigma \right) }\alpha ^{\prime }\left(
\sigma \right) \right] ^{T}+B\left( \alpha ^{\prime }\left( \sigma \right)
,\alpha ^{\prime }\left( \sigma \right) \right) \right\rangle .
\end{eqnarray*}%
As $X$ is normal to $\partial \Omega $ at $x=\alpha \left( 0\right) $, that
is $X\left( x\right) =\eta \left( x\right) =\nu \left( x\right) $, we have 
\begin{eqnarray*}
\xi ^{\prime \prime }\left( 0\right) &=&\left\langle \nabla _{v}\nabla
g\left( x\right) ,v\right\rangle +\left\vert \nabla g\left( x\right)
\right\vert \left\langle \nu \left( x\right) ,B\left( v,v\right)
\right\rangle \\
&=&Hessg\left( v,v\right) +2rII_{\partial \Omega }\left( v\right) ,
\end{eqnarray*}%
where $II_{\partial \Omega }$ is the second fundamental form relatively to $%
\partial \Omega $ with respect to $\nu $ at $x$. Then, from (\ref{kns}), we
obtain%
\begin{equation}
\xi ^{\prime \prime }\left( 0\right) =2r\left( II_{\partial \Omega }\left(
v\right) -II_{\Sigma }\left( v\right) \right) >0,  \label{csi2}
\end{equation}%
being the inequality in (\ref{csi2}) consequence of the fact that $0<r<R$
and from the comparison principle. On the other hand, as $\xi \left(
0\right) =\xi ^{\prime }\left( 0\right) =0$, setting 
\begin{equation*}
2C:=2r\left( II_{\partial \Omega }\left( v\right) -II_{\Sigma }\left(
v\right) \right) ,
\end{equation*}%
we can write%
\begin{equation*}
\xi \left( \sigma \right) =\frac{1}{2}\xi ^{\prime \prime }\left( 0\right)
\sigma ^{2}+\vartheta \left( \sigma \right) =C\sigma ^{2}+\vartheta \left(
\sigma \right)
\end{equation*}%
where 
\begin{equation*}
\underset{\sigma \rightarrow 0}{\lim }\frac{\vartheta \left( \sigma \right) 
}{\sigma ^{2}}=0.
\end{equation*}%
Given $0<\varepsilon <C$, there is $0<\tau \leq l$ such that, for $0<\sigma
\leq \tau $, we have $-\varepsilon \sigma ^{2}<\vartheta \left( \sigma
\right) <\varepsilon \sigma ^{2}$. It follows that 
\begin{eqnarray*}
d\left( \alpha \left( \sigma \right) \right) ^{2} &=&\xi \left( \sigma
\right) +r^{2} \\
&\geq &C\sigma ^{2}+r^{2}-\varepsilon \sigma ^{2}=\left( C-\varepsilon
\right) \sigma ^{2}+r^{2}\geq 0
\end{eqnarray*}%
and then%
\begin{equation}
d\left( \alpha \left( \sigma \right) \right) \geq \sqrt{\left( C-\varepsilon
\right) \sigma ^{2}+r^{2}}\text{, }0\leq \sigma \leq \tau \text{.}
\label{dqc}
\end{equation}%
As $\sigma $ is the arc length parameter, it follows that $\sigma \geq
d\left( \alpha \left( \sigma \right) ,x\right) $. Then, from (\ref{dqc}),
for $0<\sigma \leq \tau $, 
\begin{equation}
a\cosh ^{-1}\left( \frac{d\left( \alpha \left( \sigma \right) \right) }{r}%
\right) \geq a\cosh ^{-1}\left( \frac{\sqrt{\left( C-\varepsilon \right)
d\left( \alpha \left( \sigma \right) ,x\right) ^{2}+r^{2}}}{r}\right) \text{.%
}  \label{dqc2}
\end{equation}%
Let 
\begin{equation*}
\overline{\delta }:=\sup \left\{ d\left( \alpha \left( \sigma \right)
,x\right) \text{, }\sigma \in \left[ 0,\tau \right] \right\} .
\end{equation*}%
Setting $t=d\left( \alpha \left( \sigma \right) ,x\right) $, we have $%
t\rightarrow 0$ when $\sigma \rightarrow 0$. For $t\in \left[ 0,\overline{%
\delta }\right] $, set%
\begin{equation}
h\left( t\right) =a\cosh ^{-1}\left( \frac{\sqrt{\left( C-\varepsilon
\right) t^{2}+r^{2}}}{r}\right) \text{. }  \label{h}
\end{equation}%
Since $h\left( 0\right) =0$ and $h^{\prime }\left( 0\right) =\frac{a}{r}%
\sqrt{C-\varepsilon }$, (\ref{h}) can be rewritten as 
\begin{eqnarray*}
h\left( t\right) &=&h\left( 0\right) +h^{\prime }\left( 0\right) t+\theta
\left( t\right) \\
&=&\frac{a}{r}t\sqrt{C-\varepsilon }+\theta \left( t\right) ,
\end{eqnarray*}%
with 
\begin{equation*}
\underset{t\rightarrow 0}{\lim }\frac{\theta \left( t\right) }{t}=0.
\end{equation*}%
Thus, replacing in (\ref{dqc2}), we have 
\begin{equation}
a\cosh ^{-1}\left( \frac{d\left( \alpha \left( \sigma \right) \right) }{r}%
\right) \geq \left( \frac{a}{r}\sqrt{C-\varepsilon }\right) d\left( \alpha
\left( \sigma \right) ,x\right) +\theta \left( d\left( \alpha \left( \sigma
\right) ,x\right) \right)  \label{dqc3}
\end{equation}%
where 
\begin{equation*}
\underset{\sigma \rightarrow 0}{\lim }\frac{\theta \left( d\left( \alpha
\left( \sigma \right) ,x\right) \right) }{d\left( \alpha \left( \sigma
\right) ,x\right) }=0.
\end{equation*}%
Let $\varkappa \left( x\right) $, $\varkappa _{r}$ as defined in (\ref{vkapx}%
) and (\ref{vkapr}) respectively. We have $0<\varkappa \left( x\right) $
since $0<r<R$ and $\left\{ v\in T_{x}\partial \Omega \text{, }\left\vert
v\right\vert =1\right\} $ is compact. Moreover, as $\partial ^{-}\Omega $ is
compact, it follows that $0<\varkappa _{r}$. Note that $0<r\varkappa
_{r}\leq r\varkappa \left( x\right) \leq C$. Thus, given $0<\varepsilon
<r\varkappa _{r}$, it follows from (\ref{dqc3}) that there is $\delta _{1}>0$
(which does not depend on $x\in \partial ^{-}\Omega $), such that, for all $%
z\in B_{\delta _{1}}\left( x\right) \cap \partial \Omega $, we obtain%
\begin{eqnarray*}
a\cosh ^{-1}\left( \frac{d\left( z\right) }{r}\right) &\geq &\left( \frac{a}{%
r}\sqrt{r\varkappa \left( x\right) -\varepsilon }\right) d\left( z,x\right)
+o\left( d\left( z,x\right) \right) \\
&\geq &\left( \frac{a}{r}\sqrt{r\varkappa _{r}-\varepsilon }\right) d\left(
z,x\right) +o\left( d\left( z,x\right) \right)
\end{eqnarray*}%
that is, 
\begin{equation*}
w\left( z\right) \geq \left( \frac{a}{r}\sqrt{r\varkappa _{r}-\varepsilon }%
\right) d\left( z,x\right) +o\left( d\left( z,x\right) \right) \text{, }z\in
B_{\delta _{1}}\left( x\right) \cap \partial \Omega .
\end{equation*}
\end{proof}

\begin{lemma}
\label{m1}Let $a\in (0,\sqrt{\frac{r}{\left( n-1\right) \left\vert \lambda
_{r}\right\vert }})$. Then%
\begin{equation*}
0<a\sqrt{\frac{\varkappa \left( x\right) }{r}}<\frac{1}{\sqrt{n-1}}.
\end{equation*}%
where $\varkappa \left( x\right) $ is given by (\ref{vkapx}).
\end{lemma}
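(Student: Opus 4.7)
\textbf{Plan for Lemma \ref{m1}.} The inequality $a\sqrt{\varkappa(x)/r}<1/\sqrt{n-1}$ is equivalent, after squaring, to $\varkappa(x)<r/(a^{2}(n-1))$; likewise the hypothesis $a^{2}<r/((n-1)|\lambda_{r}|)$ rearranges to $|\lambda_{r}|<r/(a^{2}(n-1))$. So my whole task reduces to two assertions: the strict positivity $\varkappa(x)>0$, and the comparison $\varkappa(x)\leq|\lambda_{r}|$. Once both are in hand, the hypothesis on $a$ upgrades the second (non-strict) inequality to the strict target bound.

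The positivity has already been recorded inside the proof of Lemma \ref{boa}: the same observation that produced (\ref{csi2}), based on $0<r<R\leq R(x)$ and the comparison principle applied to the normal sphere $\Sigma$ and $\partial\Omega$, gives $II_{\partial\Omega}(v)-II_{\Sigma}(v)>0$ for every unit $v\in T_{x}\partial\Omega=T_{x}\Sigma$, and compactness of the unit sphere in $T_{x}\partial\Omega$ upgrades this to $\varkappa(x)>0$.

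The main step is $\varkappa(x)\leq|\lambda_{r}|$. Since $x\in\partial^{-}\Omega=\overline{\{\mathfrak{H}<0\}}$ and $\mathfrak{H}$ is continuous, we have $\mathfrak{H}(x)\leq 0$, so the trace of $II_{\partial\Omega}$ at $x$ is non-positive, and therefore at least one principal curvature of $\partial\Omega$ at $x$ must be $\leq 0$. Let $v_{0}\in T_{x}\partial\Omega$ be a corresponding unit principal direction, so $II_{\partial\Omega}(v_{0})\leq 0$. Because $T_{x}\partial\Omega=T_{x}\Sigma$, the vector $v_{0}$ is simultaneously a unit tangent to $\Sigma$ at $x$, and the definitions (\ref{lambdax})--(\ref{lambda}) give $II_{\Sigma}(v_{0})\geq\lambda(x)\geq\lambda_{r}$. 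Therefore
\[
\varkappa(x)\leq II_{\partial\Omega}(v_{0})-II_{\Sigma}(v_{0})\leq-II_{\Sigma}(v_{0})\leq-\lambda_{r}=|\lambda_{r}|,
\]
using $\lambda_{r}<0$ in the last equality. Plugging this into the hypothesis yields $a^{2}\varkappa(x)/r\leq a^{2}|\lambda_{r}|/r<1/(n-1)$, as required.

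The only conceptual point is extracting a non-positive principal direction $v_{0}$ of $II_{\partial\Omega}$ at $x$ from the mean-curvature sign; this is elementary linear algebra applied to $\mathrm{tr}(II_{\partial\Omega})(x)\leq 0$. Everything else is algebraic rearrangement of the hypothesis on $a$ together with the definitions (\ref{lambda}) and (\ref{vkapx}), so I do not anticipate a real obstacle: the argument should occupy only a few lines.
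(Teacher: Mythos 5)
Your proof is correct, and it follows essentially the same route as the paper: both arguments bound $\varkappa(x)$ from above by $|\lambda_r|$ by evaluating $II_{\partial\Omega}-II_{\Sigma}$ at a carefully chosen unit direction in $T_{x}\partial\Omega = T_{x}\Sigma$, and then feed that bound into the hypothesis on $a$. The one place where you deviate, and in fact improve the exposition, is the choice of test direction. The paper asserts the existence of a unit vector $v^{*}$ with $II_{\partial\Omega}(v^{*})<0$ \emph{strictly}, obtaining $\varkappa(x)< -II_{\Sigma}(v^{*})\leq |\lambda(x)|\leq |\lambda_r|$. This strict negativity is clear when $\mathfrak{H}(x)<0$, but $\partial^{-}\Omega$ is a closure, so it contains points with $\mathfrak{H}(x)=0$, and at such a point the strict inequality $II_{\partial\Omega}(v^{*})<0$ need not hold (e.g.\ if $\partial\Omega$ is totally geodesic at $x$). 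You instead extract from $\operatorname{tr} II_{\partial\Omega}(x)\leq 0$ a direction $v_{0}$ with $II_{\partial\Omega}(v_{0})\leq 0$, obtaining the non-strict $\varkappa(x)\leq |\lambda_{r}|$, and then let the \emph{strict} hypothesis $a^{2}<r/((n-1)|\lambda_{r}|)$ supply the strictness of the final bound. This is more robust at closure points of $\partial^{-}\Omega$ and costs nothing, since the strictness you need was already built into the range of $a$. Your reference to Lemma~\ref{boa}'s proof for the positivity $\varkappa(x)>0$ matches the paper, which also establishes it there rather than in Lemma~\ref{m1} itself.
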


\begin{proof}
Notice that 
\begin{equation}
\frac{a}{r}\sqrt{r\varkappa \left( x\right) }=\sqrt{\frac{a^{2}}{r}\varkappa
\left( x\right) }\leq \sqrt{\frac{\varkappa \left( x\right) }{\left(
n-1\right) \left\vert \lambda _{r}\right\vert }}=\frac{1}{\sqrt{n-1}}\sqrt{%
\frac{\varkappa \left( x\right) }{\left\vert \lambda _{r}\right\vert }}.
\label{ddiret}
\end{equation}%
On the other hand, there is $v^{\ast }\in T_{x}\partial \Omega $ such that 
\begin{equation*}
\lambda \left( x\right) \leq II_{\Sigma }\left( v^{\ast }\right)
<II_{\partial \Omega }(v^{\ast })<0
\end{equation*}%
and then%
\begin{equation*}
0<-II_{\partial \Omega }(v^{\ast })<-II_{\Sigma }\left( v^{\ast }\right)
\leq \left\vert \lambda \left( x\right) \right\vert .
\end{equation*}%
It follows that%
\begin{equation*}
0<\frac{-II_{\partial \Omega }(v^{\ast })}{\left\vert \lambda \left(
x\right) \right\vert }<\frac{-II_{\Sigma }\left( v^{\ast }\right) }{%
\left\vert \lambda \left( x\right) \right\vert }\leq 1,
\end{equation*}%
that is%
\begin{equation*}
0<\frac{II_{\partial \Omega }(v^{\ast })-II_{\Sigma }\left( v^{\ast }\right) 
}{\left\vert \lambda \left( x\right) \right\vert }<1.
\end{equation*}%
Therefore, as $\varkappa \left( x\right) =\inf \left\{ II_{\partial \Omega
}(v)-II_{\Sigma }\left( v\right) \text{, }v\in T_{x}\partial \Omega \text{, }%
\left\vert v\right\vert =1\right\} ,$%
\begin{equation*}
\sqrt{\frac{\varkappa \left( x\right) }{\left\vert \lambda \left( x\right)
\right\vert }}<1
\end{equation*}%
and then, from (\ref{ddiret}), as $\lambda _{r}\leq \lambda \left( x\right)
<0$, 
\begin{equation*}
a\sqrt{\varkappa _{r}r^{-1}}\leq a\sqrt{\varkappa \left( x\right) r^{-1}}=%
\frac{a}{r}\sqrt{r\varkappa \left( x\right) }<\frac{1}{\sqrt{n-1}}.
\end{equation*}
\end{proof}

\section{Proof of Theorem \protect\ref{MT}}

\begin{proof}
We use the Perron method.

Consider the solution of Perron (\ref{Ps}). In order to show that $u\in
C^{0}\left( \overline{\Omega }\right) $, $u|_{\partial \Omega }=f$, we will
take into account, first, the barriers for the non mean convex points of $%
\partial \Omega $ given by Lemma \ref{boa}.

Let $r\in \left( 0,R\right) $, $a\in (0,\sqrt{\frac{r}{\left( n-1\right)
\left\vert \lambda _{r}\right\vert }})$ and $K\in \lbrack 0,a\sqrt{\varkappa
_{r}r^{-1}})$. Then, there is $0<\varepsilon <r\varkappa _{r}$ such that 
\begin{equation*}
K<\frac{a}{r}\sqrt{r\varkappa _{r}-\varepsilon }\leq \frac{a}{r}\sqrt{%
r\varkappa \left( x\right) -\varepsilon }
\end{equation*}%
for all $x\in \partial ^{-}\Omega $. Then, given $x\in \partial ^{-}\Omega $%
, by the Lemma \ref{boa}, there is $\delta _{1}>0$ such that%
\begin{equation*}
w_{x}\left( z\right) =a\cosh ^{-1}\left( \frac{d\left( z\right) }{r}\right) ,
\end{equation*}%
as in Lemma \ref{Lcase} satisfies%
\begin{eqnarray}
w_{x}\left( z\right) &\geq &ar^{-1}\left( r\varkappa _{r}-\varepsilon
\right) ^{1/2}d\left( z,x\right) +o\left( d\left( z,x\right) \right)  \notag
\\
&\geq &Kd\left( z,x\right) \text{, }z\in B_{\delta _{1}}\left( x\right) \cap
\partial \Omega \text{.}  \label{wx}
\end{eqnarray}
Set%
\begin{equation*}
\delta _{0}=\min \left\{ \delta _{1},\mu \right\} ,
\end{equation*}%
where $\mu $ is given by (\ref{eps}). Then, given $\delta \in (0,\delta
_{0}] $, define%
\begin{equation*}
\epsilon =\inf \left\{ w_{x}\left( z\right) ;\text{ }z\in \partial B_{\delta
}\left( x\right) \cap \partial \Omega ,\forall x\in \partial ^{-}\Omega
\right\} \text{.}
\end{equation*}%
It follows that $\epsilon >0$ since $r<R$. Let 
\begin{equation*}
\omega _{x}^{-}\left( z\right) =f\left( x\right) -w_{x}\left( z\right) \text{%
, }\omega _{x}^{+}\left( z\right) =f\left( x\right) +w_{x}\left( z\right) 
\text{, }x\in \partial ^{-}\Omega \text{, }z\in B_{\delta }\left( x\right)
\cap \overline{\Omega }.
\end{equation*}%
From (\ref{wx}), since 
\begin{equation*}
\left\vert f\left( z\right) -f\left( x\right) \right\vert \leq Kd\left(
z,x\right) \text{, }x\in \partial ^{-}\Omega \text{, }z\in B_{\delta }\left(
x\right) \cap \partial \Omega
\end{equation*}%
we have 
\begin{equation*}
\omega _{x}^{-}\left( z\right) \leq f\left( z\right) \leq \omega
_{x}^{+}\left( z\right) \text{, }x\in \partial ^{-}\Omega \text{, }z\in
B_{\delta }\left( x\right) \cap \partial \Omega
\end{equation*}%
and, since $osc\left( f\right) <\epsilon $, 
\begin{equation*}
\omega _{x}^{-}\left( z\right) <\inf_{\partial \Omega }f\text{, }%
\sup_{\partial \Omega }f<\omega _{x}^{+}\left( z\right) \text{, }x\in
\partial ^{-}\Omega \text{, }z\in \partial B_{\delta }\left( x\right) \cap
\Omega .
\end{equation*}%
Then, setting%
\begin{equation*}
W_{x}^{-}\left( z\right) =\left\{ 
\begin{array}{c}
\max \left\{ \omega _{x}^{-}\left( z\right) ,\inf_{\partial \Omega
}f\right\} ,\text{ if }z\in B_{\delta }\left( x\right) \cap \overline{\Omega 
} \\ 
\inf_{\partial \Omega }f,\text{ if }z\in \overline{\Omega }-B_{\delta
}\left( x\right) \cap \overline{\Omega }%
\end{array}%
\right.
\end{equation*}%
and 
\begin{equation*}
W_{x}^{+}\left( z\right) =\left\{ 
\begin{array}{c}
\min \left\{ \omega _{x}^{+}\left( z\right) ,\sup_{\partial \Omega
}f\right\} ,\text{ if }z\in B_{\delta }\left( x\right) \cap \overline{\Omega 
} \\ 
\sup_{\partial \Omega }f,\text{ if }z\in \overline{\Omega }\backslash
B_{\delta }\left( x\right) \cap \overline{\Omega }%
\end{array}%
\right. ,
\end{equation*}%
we have from Lemma \ref{Lcase} that $W_{x}^{-}$, $W_{x}^{+}$ are subsolution
and supersolution of $\mathfrak{M}$ in $\overline{\Omega }$, respectively, $%
W_{x}^{-}\in S$, $W_{x}^{+}\leq \sup_{\partial \Omega }f$, with $%
W_{x}^{-}\left( x\right) =W_{x}^{+}\left( x\right) =f\left( x\right) $.

At the mean convex points, we proceed as follows: given $x\in \partial
\Omega \backslash \partial ^{-}\Omega $, since $x$ is a mean convex point
and $\partial \Omega $ is of class $C^{2}$, there is a neighborhood $U$ of $%
x $ in $\partial \Omega $ such that $U=\overline{B}\cap \partial \Omega $,
where $B\subset \Omega $ is a mean convex $C^{2}$-domain. The Dirichlet
problem (\ref{PD}) is solvable on $B$ for arbitrary continuous boundary
data. We observe that, here, at the mean convex point, the hypothesis on the
Ricci curvature is necessary only at the points where $\mathfrak{H}\left(
x\right) =0$. We can then choose $g_{x}^{\pm }\in C^{2}\left( B\right) \cap
C^{0}\left( \overline{B}\right) $ satisfying $\mathfrak{M}\left( g_{x}^{\pm
}\right) =0$ in $B$, such that $g_{x}^{\pm }\left( x\right) =f\left(
x\right) $, 
\begin{equation*}
g_{x}^{-}\left( z\right) \leq f\left( z\right) \leq g_{x}^{+}\left( z\right) 
\text{, }z\in U
\end{equation*}%
and 
\begin{equation*}
g_{x}^{-}\left( z\right) <\inf_{\partial \Omega }f\text{, }\sup_{\partial
\Omega }f<g_{x}^{+}\left( z\right) \text{, }z\in \partial B\backslash U.
\end{equation*}%
Then, 
\begin{equation*}
\mathfrak{G}_{x}^{-}\left( z\right) =\left\{ 
\begin{array}{c}
\max \left\{ g_{x}^{-}\left( z\right) ,\inf_{\partial \Omega }f\right\} ,%
\text{ if }z\in \overline{B} \\ 
\inf_{\partial \Omega }f,\text{ if }z\in \overline{\Omega }\backslash 
\overline{B}%
\end{array}%
\right.
\end{equation*}%
and 
\begin{equation*}
\mathfrak{G}_{x}^{+}\left( z\right) =\left\{ 
\begin{array}{c}
\min \left\{ g_{x}^{+}\left( z\right) ,\sup_{\partial \Omega }f\right\} ,%
\text{ if }z\in \overline{B} \\ 
\sup_{\partial \Omega }f,\text{ if }z\in \overline{\Omega }\backslash 
\overline{B}%
\end{array}%
\right.
\end{equation*}%
are subsolution and supersolution of $\mathfrak{M}$ in $\overline{\Omega }$,
respectively, $\mathfrak{G}_{x}^{-}\in S$, $\mathfrak{G}_{x}^{+}\leq
\sup_{\partial \Omega }f$, with $\mathfrak{G}_{x}^{-}\left( x\right) =%
\mathfrak{G}_{x}^{+}\left( x\right) =f\left( x\right) $.

Thus, for each point $x\in \partial \Omega $ we got the barriers and, then,
the solution of Perron (\ref{Ps}) is such that $u\in C^{0}\left( \overline{%
\Omega }\right) $, $u|_{\partial \Omega }=f$.
\end{proof}

\bigskip

\bigskip 

\bigskip 

{\footnotesize *Ari J. Aiolfi: DepMat/Universidade Federal de Santa Maria,
Santa Maria RS/Brazil (ari.aiolfi@mail.ufsm.br);}

{\footnotesize *Giovanni S. Nunes \& Lisandra O. Sauer: IFM/Universidade
Federal de Pelotas, Pelotas RS/Brazil (giovanni.nunes@ufpel.edu.br,
lisandra.sauer@ufpel.edu.br);}

{\footnotesize *Rodrigo B. Soares: IMEF/Universidade Federal de Rio Grande,
Rio Grande RS/Brazil (rodrigosoares@furg.br).}

\end{document}